\newtheorem{Theorem}{Theorem}[section]
\newtheorem{Lemma}[Theorem]{Lemma}
\newtheorem{Definition}[Theorem]{Definition}
\newtheorem{Remark}[Theorem]{Remark}
\def\V{\mbox{Var}}
\def\Z{{\mathbb Z}}
\def\R\re
\def\V{\bf V}
\def \re{{\mathbb R}}
\def \C{{\mathbb C}}
\def \V{{\bf V}}
\def \M{{\mathbb M}}
\def \cA{{\mathcal A}}
\def \cH{{\mathcal H}}
\begin{document}
\title[B\"acklund transformations for Transparent connections]{B\"acklund transformations for Transparent connections}

%\author[M. Dunajski]{Maciej Dunajski}
% \address{ Department of Applied Mathematics and Theoretical Physics,
%University of Cambridge,
%Cambridge CB3 0WA, UK}
% \email {m.dunajski@damtp.cam.ac.uk}

\author[G.P. Paternain]{Gabriel P. Paternain}
 \address{ Department of Pure Mathematics and Mathematical Statistics,
University of Cambridge,
Cambridge CB3 0WB, UK}
 \email {g.p.paternain@dpmms.cam.ac.uk}

%\subjclass{53C25, 53C21, 58F17, 35J15}

%\date{May 2009}

%\maketitle

\begin{abstract} Let $M$ be a closed orientable surface of negative curvature.
A connection is said to be transparent if its parallel transport along closed
geodesics is the identity. We describe all transparent $SU(2)$-connections
and we show that they can be built up from suitable B\"acklund transformations.

\end{abstract}

\maketitle

\section{Introduction}

Let $(M,g)$ be a closed Riemannian manifold.
A unitary connection $\nabla$ on the trivial bundle $M\times\C^n$ is said to be {\it transparent} if its parallel transport along every closed
geodesic of $g$ is the identity. These connections are ``ghosts" or ``invisible" from the point of view of the closed geodesics of $g$. 
The problem of determining a connection from its parallel transport along geodesics is a natural integral-geometry problem
that can be considered also in the case of manifolds with boundary or $\re^d$ with appropriate decay
conditions at infinity, and in this context, it has been considered by several authors \cite{E,FU,No,Sha,V}. It arises for example when one considers the wave equation associated to the
Schr\"odinger equation with external Yang-Mills potential $A$ and the inverse problem of determining
the potential $A$ from the Dirichlet-to-Neumann map $\Lambda_{A}$. 
Returning to the case of closed manifolds we note  
that recently, L. Mason
has classified all transparent connections on $S^2$ and $\re \mathbb P^2$ with the standard round metric \cite{Ma1} using a twistor correspondence similar
to the one used in \cite{Ma} to classify anti-self-dual Yang-Mills connections over $S^2\times S^2$ with split signature.

There is another natural but stronger notion of transparency which arises by considering an appropriate cocycle over the geodesic flow. Write $\nabla=d+A$, where
$A:TM\to \mathfrak{u}(n)$ is a smooth function linear in $v\in T_{x}M$ for all $x\in M$.
Consider the geodesic flow $\phi_t$ of the metric $g$ acting on the unit sphere bundle
$SM$. The connection defines a $U(n)$-valued cocycle over the flow $\phi_t$ determined
by the ODE of parallel transport along geodesics. In other words, let
$C:SM\times \re\to U(n)$ be given by
\[\frac{d}{dt}C(x,v,t)=-A(\phi_{t}(x,v))C(x,v,t),\;\;\;\;\;C(x,v,0)=\mbox{\rm Id}.\]
The function $C$ is a {\it cocycle}: 
\[C(x,v,t+s)=C(\phi_{t}(x,v),s)\,C(x,v,t)\]
for all $(x,v)\in SM$ and $s,t\in\re$. The cocycle $C$ is said to be {\it cohomologically trivial}
if there exists a smooth function $u:SM\to U(n)$ such that
\[C(x,v,t)=u(\phi_{t}(x,v))u^{-1}(x,v).\]

\begin{Definition} {\rm We will say that a connection $\nabla$ is cohomologically trivial
if $C$ is cohomologically trivial.}
\end{Definition}

Obviously a cohomologically trivial connection is transparent, since the latter
simply means $C(x,v,T)=\mbox{\rm Id}$ every time that $\phi_{T}(x,v)=(x,v)$.
There is one important situation in which both definitions agree. If $\phi_t$ is
Anosov, then the Livsic theorem \cite{L1,L2} together with the regularity results in \cite{NT}
imply that a transparent connection is also cohomologically trivial. 
The Anosov property is satisfied, if for example $(M,g)$ has negative sectional curvature.
  
In \cite{P} we studied transparent connections over negatively curved surfaces
and established several of their basic properties. In the present paper we will explain
how to obtain all transparent connections when the structure group is $SU(2)$
(our main result is Theorem \ref{theorem:main} below).
The description of the connections is based on a B\"acklund transformation  which is quite
reminiscent of Uhlenbeck's work on unitons \cite{U}. At the same time we will set up a general
correspondence for cohomologically trivial connections over any surface regardless
of the sign of its Gaussian curvature thus generalising Theorem B in \cite{P}.

\medskip

\noindent{\it Acknowledgement:} I am very grateful to Maciej Dunajski for several useful discussions related to this paper. In particular, 
he suggested to me the possibility that a B\"acklund transformation may exist for transparent connections.

\section{Preliminary results}
\label{prelim}
In this section we summarise the setting in \cite{P} and the results needed for
the subsequent sections.

Let $M$ be an oriented surface with a Riemannian metric and let
$SM$ be its unit tangent bundle. Recall that $SM$ has a
canonical framing $\{X,H,V\}$, where $X$ is the geodesic vector field, $V$
is the vertical vector field and $H=[V,X]$ is the horizontal vector field.

Let $\M_{n}(\mathbb C)$ be the set of $n\times n$ complex
matrices. Given functions $u,v:SM\to \M_{n}(\mathbb C)$ we consider the
inner product
\[\langle u,v \rangle =\int_{SM}\mbox{\rm trace}\,(u\,v^*)\,d\mu,\]
where $\mu$ is the Riemannian measure associated with the Sasaki metric
of $SM$ which makes $\{X,H,V\}$ into an orthonormal frame.
The space $L^{2}(SM,\M_{n}(\mathbb C))$ decomposes orthogonally
as a direct sum
\[L^{2}(SM,\M_{n}(\mathbb C))=\bigoplus_{n\in\mathbb Z}H_{n}\]
where $-iV$ acts as $n\,\mbox{\rm Id}$ on $H_n$.

Following Guillemin and Kazhdan in \cite{GK} we introduce the following
first order elliptic operators 
$$\eta_{+},\eta_{-}:C^{\infty}(SM,\M_{n}(\mathbb C))\to
C^{\infty}(SM,\M_{n}(\C))$$ given by
\[\eta_{+}:=(X-iH)/2,\;\;\;\;\;\;\eta_{-}:=(X+iH)/2.\]
Clearly $X=\eta_{+}+\eta_{-}$. Let $\Omega_{n}:=C^{\infty}(SM,\M_{n}(\C))\cap H_{n}$.
We have
\[\eta_{+}:\Omega_{n}\to \Omega_{n+1},\;\;\;\;\eta_{-}:\Omega_{n}\to \Omega_{n-1},\;\;\;\;(\eta_{+})^{*}=-\eta_{-}.\]

If a connection $A$ is cohomologically trivial there exists a smooth $u:SM\to U(n)$ such that $C(x,v,t)=u(\phi_{t}(x,v))u^{-1}(x,v)$.
Differentiating with respect to $t$ and setting $t=0$, this is equivalent to
$X(u)+Au=0$, where we now regard $A$ as a function $A:SM\to\mathfrak{u}(n)$.
To deal with this equation, we introduce the ``twisted'' operators
\[\mu_{+}:=\eta_{+}+A_1,\;\;\;\;\mu_{-}:=\eta_{-}+A_{-1}.\]
where $A=A_{-1}+A_{1}$, and
\[A_{1}:=\frac{A-iV(A)}{2}\in H_{1},\]
\[A_{-1}:=\frac{A+iV(A)}{2}\in H_{-1}.\]
Observe that this decomposition corresponds precisely with the usual decomposition of $\mathfrak{u}(n)$-valued 1-forms
on a surface:
\[\Omega^{1}(M,\mathfrak{u}(n))\otimes \C=\Omega^{1,0}(M,\mathfrak{u}(n))\oplus \Omega^{0,1}(M,\mathfrak{u}(n)),\]
given by the eigenvalues $\pm i$ of the Hodge star operator $\star$ of the metric $g$.

We also have
\[\mu_{+}:\Omega_{n}\to \Omega_{n+1},\;\;\;\;\mu_{-}:\Omega_{n}\to \Omega_{n-1},\;\;\;\;(\mu_{+})^{*}=-\mu_{-}.\]
The equation $X(u)+Au=0$ is now $\mu_{+}(u)+\mu_{-}(u)=0$.

Given an element $u\in C^{\infty}(SM,\M_{n}(\C))$, we write
$u=\sum_{m\in\Z}u_{m}$, where $u_m\in \Omega_m$.
We will say that $u$ has degree $N$, if $N$ is the smallest
non-negative integer such that $u_{m}=0$ for all $m$ with $|m|\geq N+1$.
The following finiteness result will be crucial for us.

\begin{Theorem}\cite[Theorem 5.1]{P} If $M$ has negative curvature every solution
$u$ of $X(u)+Au=0$ has finite degree.
\label{theorem:finite}
\end{Theorem}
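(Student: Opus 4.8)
The plan is to adapt the Guillemin--Kazhdan argument (the source of this statement in \cite{P}) to the twisted operators $\mu_{\pm}$. Write $u=\sum_{m\in\Z}u_m$ with $u_m\in\Omega_m$. Since $\mu_{+}\colon\Omega_{m-1}\to\Omega_m$ and $\mu_{-}\colon\Omega_{m+1}\to\Omega_m$, projecting the equation $\mu_{+}(u)+\mu_{-}(u)=0$ onto $H_m$ produces the coupled system
\[\mu_{+}(u_{m-1})+\mu_{-}(u_{m+1})=0,\qquad m\in\Z.\]
The goal is to show $u_m=0$ for all sufficiently large $|m|$; since $u$ is smooth this is exactly the assertion that $u$ has finite degree. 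I first note that $\mu_{+}(u)\in L^{2}(SM,\M_{n}(\C))$ (as $u$ is smooth and $A_1$ is bounded) and that its $H_m$-component is $\mu_{+}(u_{m-1})$; hence $\|\mu_{+}(u_m)\|\to0$ as $m\to+\infty$, and symmetrically $\|\mu_{-}(u_m)\|\to0$ as $m\to-\infty$, where $\|\cdot\|$ is the norm of the inner product introduced above.

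The core of the proof is an energy identity. Using $(\mu_{+})^{*}=-\mu_{-}$, and hence $(\mu_{-})^{*}=-\mu_{+}$, one has for every $f\in\Omega_m$
\[\|\mu_{+}f\|^{2}-\|\mu_{-}f\|^{2}=\langle[\mu_{+},\mu_{-}]f,f\rangle.\]
Expanding $[\mu_{+},\mu_{-}]=[\eta_{+},\eta_{-}]+[\eta_{+},A_{-1}]+[A_{1},\eta_{-}]+[A_{1},A_{-1}]$, the last three terms are of order zero: they are multiplication by the smooth matrix functions $\eta_{+}(A_{-1})$, $-\eta_{-}(A_{1})$ and $[A_{1},A_{-1}]$, which together reassemble the curvature $\star F_{A}$; write their sum as $\Phi$, a fixed bounded bundle map with $\|\Phi\|\le C_{0}$ independently of $m$. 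A standard computation with the structure equations of $SM$ shows that $[\eta_{+},\eta_{-}]$ acts on $\Omega_m$ as multiplication by $-\tfrac{m}{2}K$, $K$ the Gaussian curvature, so
\[\|\mu_{+}f\|^{2}-\|\mu_{-}f\|^{2}=-\frac{m}{2}\langle Kf,f\rangle+\langle\Phi f,f\rangle,\qquad f\in\Omega_m.\]
Since $M$ is closed with $K<0$, there is $c>0$ with $-K\ge c$; thus for $m\ge m_{0}:=\lceil 2C_{0}/c\rceil+1$ we get $\|\mu_{+}f\|^{2}-\|\mu_{-}f\|^{2}\ge\varepsilon_{m}\|f\|^{2}$ with $\varepsilon_{m}:=\tfrac{cm}{2}-C_{0}>0$, for all $f\in\Omega_m$, and symmetrically $\|\mu_{-}f\|^{2}-\|\mu_{+}f\|^{2}\ge\varepsilon_{|m|}\|f\|^{2}$ for all $f\in\Omega_m$ with $m\le-m_{0}$.

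It remains to combine these. Fix $p\ge m_{0}$. The coupled equation (index $m+1$) gives $\|\mu_{-}(u_{m+2})\|=\|\mu_{+}(u_m)\|$, and applying the energy lower bound to the mode $m+2$ yields, for $m\ge m_{0}-2$,
\[\|\mu_{+}(u_{m+2})\|^{2}\ \ge\ \|\mu_{+}(u_m)\|^{2}+\varepsilon_{m+2}\|u_{m+2}\|^{2}.\]
Iterating from $p$, $\|\mu_{+}(u_{p+2k})\|^{2}\ge\sum_{j=1}^{k}\varepsilon_{p+2j}\|u_{p+2j}\|^{2}$ for all $k\ge1$; letting $k\to\infty$ the left side tends to $0$, while the right side is a series of non-negative terms, so $u_{p+2j}=0$ for every $j\ge1$. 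Taking $p=m_{0}$ and $p=m_{0}+1$ gives $u_m=0$ for all $m\ge m_{0}+2$. The mirror argument, using $\|\mu_{+}(u_{m-2})\|=\|\mu_{-}(u_m)\|$, the lower bound for $m\le-m_{0}$, and $\|\mu_{-}(u_m)\|\to0$ as $m\to-\infty$, gives $u_m=0$ for all $m\le-m_{0}-2$. Hence $u_m=0$ whenever $|m|\ge m_{0}+2$, i.e.\ $u$ has finite degree.

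I expect the main obstacle to be the rigorous proof of the energy identity: one must check that every term involving the connection $A$ is genuinely of order zero and bounded uniformly in $m$, so that only $[\eta_{+},\eta_{-}]$ contributes the decisive factor $m$, and one must handle the matrix non-commutativity entering the commutators $[\eta_{\pm},A_{\mp1}]$ and $[A_{1},A_{-1}]$ and their identification with $\star F_{A}$. The underlying integration by parts is routine once the adjoint relations $(\mu_{\pm})^{*}=-\mu_{\mp}$, recorded above, are in hand.
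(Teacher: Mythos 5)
Your argument is correct and is essentially the same as the proof of Theorem 5.1 in \cite{P} that this statement cites: one passes to the twisted Guillemin--Kazhdan operators $\mu_{\pm}$, uses the commutator/energy identity on $\Omega_m$ in which the negative curvature term $-\tfrac{m}{2}\langle Kf,f\rangle$ dominates the $m$-independent zeroth-order contribution of $A$ for $|m|$ large, and then combines the resulting monotonicity of $\|\mu_{+}(u_m)\|$ along the recursion $\mu_{+}(u_{m})+\mu_{-}(u_{m+2})=0$ with the square-summability of these norms to force $u_m=0$ for large $|m|$. I see no gaps.
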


%\begin{Remark}{\rm  Suppose $u$ has finite degree $N$ and $\mu_{+}(u)+\mu_{-}(u)=0$.
%Then $\mu_{+}(u_{N})=\mu_{-}(u_{-N})=0$. 
%}
%\label{Remark:key}
%\end{Remark}

For future use, it is convenient to write the operators $\eta_{-}$ and $\mu_{-}$ in local
coordinates. Consider isothermal coordinates $(x,y)$ on $M$ such that the metric
can be written as $ds^2=e^{2\lambda}(dx^2+dy^2)$ where $\lambda$ is a smooth
real-valued function of $(x,y)$. This gives coordinates $(x,y,\theta)$ on $SM$ where
$\theta$ is the angle between a unit vector $v$ and $\partial/\partial x$.
In these coordinates, $V=\partial/\partial\theta$ and
the vector fields $X$ and $H$ are given by:
\[X=e^{-\lambda}\left(\cos\theta\frac{\partial}{\partial x}+
\sin\theta\frac{\partial}{\partial y}+
\left(-\frac{\partial \lambda}{\partial x}\sin\theta+\frac{\partial\lambda}{\partial y}\cos\theta\right)\frac{\partial}{\partial \theta}\right);\]
\[H=e^{-\lambda}\left(-\sin\theta\frac{\partial}{\partial x}+
\cos\theta\frac{\partial}{\partial y}-
\left(\frac{\partial \lambda}{\partial x}\cos\theta+\frac{\partial \lambda}{\partial y}\sin\theta\right)\frac{\partial}{\partial \theta}\right).\]
Consider $u\in\Omega_n$ and write it locally as $u(x,y,\theta)=h(x,y)e^{in\theta}$.
Using these formulas a simple, but tedious calculation shows that
\begin{equation}
\eta_{-}(u)=e^{-(1+n)\lambda}\bar{\partial}(he^{n\lambda})e^{i(n-1)\theta},
\label{eq:eta}
\end{equation}
where $\bar{\partial}=\frac{1}{2}\left(\frac{\partial}{\partial x}+i\frac{\partial}{\partial y}\right)$.
In order to write $\mu_{-}$ suppose that $A(x,y,\theta)=a(x,y)\cos\theta+b(x,y)\sin\theta$.
If we also write $A=A_{x}dx+A_{y}dy$, then $A_x=ae^{\lambda}$ and $A_y=be^{\lambda}$.
Let $A_{\bar{z}}:=\frac{1}{2}(A_{x}+iA_{y})$.
Using the definition of $A_{-1}$ we derive
\begin{equation}
A_{-1}=\frac{1}{2}(a+ib)e^{-i\theta}=A_{\bar{z}}d\bar{z}.
\label{eq:a1}
\end{equation}
Putting this together with (\ref{eq:eta}) we obtain
\begin{equation}
\mu_{-}(u)=e^{-(1+n)\lambda}\left(\bar{\partial}(he^{n\lambda})+A_{\bar{z}}he^{n\lambda}\right)e^{i(n-1)\theta}.
\label{eq:mu}
\end{equation}

Note that $\Omega_n$ can be identified with the set of smooth sections
of the bundle $(M\times \M_{2}(\mathbb C))\otimes K^{\otimes n}$ where $K$ is the canonical line bundle. The identification takes $u=he^{in\theta}$ into $he^{n\lambda}(dz)^n$ ($n\geq 0$)
and $u=he^{-in\theta}\in \Omega_{-n}$ into $he^{n\lambda}(d\bar{z})^n$.
The second equality in (\ref{eq:a1}) should be understood using this
identification.

\section{A general correspondence}

The purpose of this section is to briefly indicate that the calculations done in the
proof of Theorem B in \cite{P} also give a general classification result for cohomologically
trivial connections on any surface.

Let $M$ be an oriented surface with a Riemannian metric and let
$SM$ be its unit tangent bundle. 
Let
\[\cA:=\{A:SM\to \mathfrak{u}(n):\;\;V^{2}(A)=-A\}.\]
The set $\cA$ is identified with the set of all unitary
connections on the trivial bundle $M\times\C^n$. Indeed, a function
$A$ satisfying $V^{2}(A)+A=0$ is a function that locally can be written
as $A(x,y,\theta)=a(x,y)\cos\theta+b(x,y)\sin\theta$.

Recall from the introduction that $A$ is said to be cohomologically trivial
if there exists a smooth $u:SM\to U(n)$ such that $C(x,v,t)=u(\phi_{t}(x,v))u^{-1}(x,v)$.
Differentiating with respect to $t$ and setting $t=0$ this is equivalent to
\begin{equation}
X(u)+Au=0.
\label{eq:trans}
\end{equation}

Let $\cA_{0}$ be the set of all cohomologically trivial connections, that is,
the set of all $A\in \mathcal A$ such that there exists $u:SM\to U(n)$
for which (\ref{eq:trans}) holds.

Given a vector field $W$ in $SM$, let $G_{W}$ be the set
of all $u:SM\to U(n)$ such that $W(u)=0$, i.e. first integrals
of $W$. Note that $G_{V}$ is nothing but the group
of gauge transformations of the trivial bundle $M\times \C^n$.

We wish to understand $\cA_{0}/G_{V}$. Now let $\cH_{0}$ be the set
of all $f:SM\to\mathfrak{u}(n)$ such that
$$H(f)+VX(f)=[X(f),f]$$
and there is $u:SM\to U(n)$ such that
$f=u^{-1}V(u)$. It is easy to check that
$G_{X}$ acts on $\cH_{0}$ by $f\mapsto a^{-1}f\,a+a^{-1}V(a)$
where $a\in G_{X}$.

\begin{Theorem} There is a 1-1 correspondence between
$\cA_{0}/G_{V}$ and $\cH_{0}/G_{X}$.
\label{theorem:1-1}
\end{Theorem}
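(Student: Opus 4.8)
The plan is to construct the correspondence explicitly in both directions and check that the gauge actions intertwine correctly. Starting from a cohomologically trivial connection $A\in\cA_0$, there is $u:SM\to U(n)$ with $X(u)+Au=0$, i.e. $A=-X(u)u^{-1}$. Two choices of $u$ differ by right multiplication by an element of $G_X$ (a first integral of $X$), since if $X(u)+Au=0$ and $X(u')+Au'=0$ then $X(u'u^{-1})=0$. I would then set $f:=u^{-1}V(u)$. The first task is to verify that $f\in\cH_0$, i.e. that $f$ satisfies $H(f)+VX(f)=[X(f),f]$. This should follow by differentiating the relation $A=-X(u)u^{-1}$: apply $V$ to get a formula for $V(A)$, use the bracket relations $[V,X]=H$, $[X,H]=?$ and $[H,V]=X$ for the canonical framing on $SM$, and rewrite everything in terms of $f$ and its derivatives along $X$. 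Since $A$ and $V(A)$ are functions pulled back from $M$ (equivalently $V^2(A)=-A$, which is automatic as $A\in\cA$), one obtains an identity that, when expressed via $f=u^{-1}V(u)$, is precisely $H(f)+VX(f)=[X(f),f]$. The key structural input is that the condition ``$A$ comes from a connection on $M$'' translates into a differential constraint on $f$ once we eliminate $u$.

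Next I would check that the map $A\mapsto f$ descends to $\cA_0/G_V\to\cH_0/G_X$. If we replace $A$ by a gauge-equivalent connection $g^{-1}Ag+g^{-1}X(g)$ with $g\in G_V$ (so $V(g)=0$, $g$ pulled back from $M$), the solution $u$ gets replaced by $g^{-1}u$ (up to the $G_X$ ambiguity), hence $f=u^{-1}V(u)$ becomes $u^{-1}g\,V(g^{-1}u)=u^{-1}V(u)=f$ since $V(g)=0$ — so $f$ is unchanged by gauge transformations in $G_V$. Conversely the $G_X$ ambiguity in the choice of $u$, $u\mapsto ua$ with $a\in G_X$, sends $f=u^{-1}V(u)$ to $a^{-1}f a+a^{-1}V(a)$, which is exactly the stated $G_X$-action on $\cH_0$. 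So the assignment $[A]\mapsto[f]$ is well defined.

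For the inverse direction, given $[f]\in\cH_0/G_X$, pick $u:SM\to U(n)$ with $f=u^{-1}V(u)$ (such $u$ exists by the definition of $\cH_0$) and set $A:=-X(u)u^{-1}$. The content to verify is that $A\in\cA_0$, i.e. that $A$ takes values in $\mathfrak u(n)$ (clear, as $X(u)u^{-1}$ is skew-Hermitian for $u$ unitary) and that $A$ actually comes from a connection on $M$, equivalently $V^2(A)+A=0$. This last point is where the PDE $H(f)+VX(f)=[X(f),f]$ gets used: running the computation of the first paragraph in reverse, the constraint on $f$ forces $V^2(A)=-A$. Then $A\in\cA_0$ tautologically since $u$ solves $X(u)+Au=0$ by construction. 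One checks this is inverse to the first map up to the respective group actions: changing the choice of $u$ in $f=u^{-1}V(u)$ by $u\mapsto ua$, $V(a)=0$, gives a new $A$ differing by the gauge transformation $a|_{t=0}$... more carefully, $a\in G_V$ here is what changes $A$ within its $G_V$-class, so $[A]$ is well defined, and the two constructions are mutually inverse by inspection.

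I expect the main obstacle to be the first paragraph: the careful derivation, using the precise commutator relations of $\{X,H,V\}$ on $SM$ (in particular $[X,H]=-KV$ with $K$ the Gaussian curvature, though $K$ should drop out of the final first-integral-type identity), that the equation $X(u)+Au=0$ together with $V^2(A)+A=0$ is equivalent to $f:=u^{-1}V(u)$ satisfying $H(f)+VX(f)=[X(f),f]$. This is a bookkeeping computation with noncommuting matrix-valued functions and three vector fields, and getting the bracket terms and the $V$-derivatives to land exactly on the stated equation — with no curvature term surviving — is the delicate part; everything else (the two gauge-action compatibilities, unitarity, and the mutual-inverse check) is formal once this equivalence is in hand. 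It is presumably exactly the computation carried out in the proof of Theorem B in \cite{P}, now run without the negative-curvature hypothesis.
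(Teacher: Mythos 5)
Your proposal follows essentially the same route as the paper: both directions of the correspondence are given by $f=u^{-1}V(u)$ and $A=-X(u)u^{-1}$, the key equivalence between $X(u)+Au=0$ with $V^2(A)+A=0$ and the PDE $H(f)+VX(f)=[X(f),f]$ is deferred to the computation in Theorem B of \cite{P} exactly as the paper does (the paper adds only the geometric gloss that the PDE comes from the vanishing of $F_B(X,V)$ and $F_B(H,V)$ for $B=u^{-1}du+u^{-1}\pi^*Au$), and the two gauge ambiguities are matched up in the same way. The one slip is in the backward direction, where the ambiguity in $u$ for fixed $f$ is left multiplication $u\mapsto cu$ by $c\in G_V$ (right multiplication by $a$ with $V(a)=0$ would change $f$ to $a^{-1}fa$), but you arrive at the correct conclusion that this ambiguity moves $A$ within its $G_V$-orbit.
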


\begin{proof} Forward direction: a cohomologically trivial connection $A$
comes with a $u$ such that $X(u)+Au=0$. If we set
$f:=u^{-1}V(u)$, then $f\in \cH_{0}$, i.e., $f$ satisfies the PDE
$H(f)+VX(f)=[X(f),f]$. This is a calculation, exactly
as in the proof of Theorem B in \cite{P}, but for the reader's convenience
we explain the geometric origin of this equation. Using $u$ we may
define a connection on $SM$ gauge equivalent to $\pi^*A$ by setting
$B:=u^{-1}du+u^{-1}\pi^*A u$, where $\pi:SM\to M$ is the foot-point projection.
 Since $\pi^*A$ is the pull-back of a connection
on $M$, the curvature $F_{B}$ of $B$ must vanish when one of the entries
is the vertical vector field $V$. The PDE $H(f)+VX(f)=[X(f),f]$
arises by combining the two equations
$F_{B}(X,V)=F_{B}(H,V)=0$ with $B(X)=0$.

Backward direction: Given $f$ with $fu=V(u)$, set $A:=-X(u)u^{-1}$.
Then $A\in \cA_{0}$, i.e. $V^{2}(A)=-A$; again this is a calculation
done fully in Theorem B in \cite{P}.

Now there are two ambiguities here. Going forward, we may change
$u$ as long as we solve $X(u)+Au=0$. This changes $f$ by the action
of $G_{X}$. Going backwards we may change $u$ as long as $fu=V(u)$,
this changes $A$ by a gauge transformation, i.e. an element
in $G_{V}$.

\end{proof}

\begin{Remark}{\rm 
Note that if the geodesic flow is transitive (i.e. there is a dense orbit)
the only first integrals are the constants and thus $G_{X}=U(n)$
acts simply by conjugation. If $M$ is closed and of negative curvature,
the geodesic flow is Anosov and therefore transitive.

The fact that the PDE describing cohomologically trivial connections
arises from zero curvature conditions is an indication of the possible
``integrable'' nature of the problem at hand. The existence
of a B\"acklund transformation that we will introduce shortly is another
typical feature of integrable systems. An interesting point here
is that the space $\cH_{0}/G_{X}$ is in some sense simpler and larger when
the underlying geodesic flow is more complicated, i.e. when it is transitive
$G_X$ reduces to $U(n)$.

}

\end{Remark}

\section{The B\"acklund transformation}
\label{bt}
For the remainder of this paper we restrict to the case in which the structure group is
$SU(2)$.

Suppose there is a smooth map $b:SM\to SU(2)$ such that
$f:=b^{-1}V(b)$ solves the PDE:
\begin{equation}
H(f)+VX(f)=[X(f),f].
\label{mypde}
\end{equation}
Then, by Theorem \ref{theorem:1-1}, $A:=-X(b)b^{-1}$ defines a cohomologically trivial connection on $M$
and $-\star A=V(A)=-bX(f)b^{-1}-H(b)b^{-1}$. 

\begin{Lemma}
Let $g:M\to \mathfrak{su}(2)$ be a smooth map with
$\det g=1$ (i.e. $g^{2}=-\mbox{\rm Id}$). Then, there exists
$a:SM\to SU(2)$ such that $g=a^{-1}V(a)$.
\label{lemma:int}
\end{Lemma}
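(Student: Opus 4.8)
The plan is to solve $g=a^{-1}V(a)$, i.e.\ $V(a)=ag$, on each fibre of $\pi\colon SM\to M$ and then to patch the fibrewise solutions together. Since $g$ is $\mathfrak{su}(2)$-valued, trace-free and satisfies $\det g=1$, Cayley--Hamilton gives $g^{2}=-\mathrm{Id}$; consequently $e^{\theta g}=\cos\theta\,\mathrm{Id}+\sin\theta\, g$ is $2\pi$-periodic in $\theta$ and takes values in $SU(2)$. Working in a local trivialisation of the circle bundle $SM\to M$ with fibre coordinate $\theta$ (so that $V=\partial/\partial\theta$), the function $a:=e^{\theta g}$, with $g$ pulled back from the base, satisfies $V(a)=-\sin\theta\,\mathrm{Id}+\cos\theta\, g=(\cos\theta\,\mathrm{Id}+\sin\theta\, g)g=ag$, again by $g^{2}=-\mathrm{Id}$. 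Thus local solutions cost nothing; the whole point is to manufacture a global one, and the obstacle is that $\theta$ is not globally defined: under a change of trivialisation $\theta\mapsto\theta+\psi$ the local solution $e^{\theta g}$ gets multiplied on the left by $e^{\psi g}$.

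To globalise I would choose a CW structure on $M$ with a single $2$-cell and set $U_{1}$ to be the open $2$-cell and $U_{0}$ an open neighbourhood of the $1$-skeleton, so that $M=U_{0}\cup U_{1}$, with $U_{0}$ homotopy equivalent to a wedge of circles, $U_{1}$ a disc, and $U_{0}\cap U_{1}$ homotopy equivalent to a circle. Since $U_{1}$ is contractible and $H^{2}(U_{0};\Z)=0$, the circle bundle $SM$ is trivial over each of $U_{0}$ and $U_{1}$, hence carries global fibre coordinates $\theta_{0}$ and $\theta_{1}$ there; put $a^{(i)}:=e^{\theta_{i}g}$ on $\pi^{-1}(U_{i})$. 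On the overlap one has $\theta_{0}=\theta_{1}+\psi$ for some smooth $\psi\colon U_{0}\cap U_{1}\to\re/2\pi\Z$, so $\gamma:=a^{(0)}\,(a^{(1)})^{-1}=e^{\psi g}$ is a smooth $SU(2)$-valued function which is constant along the fibres, i.e.\ pulled back from $U_{0}\cap U_{1}\subset M$.

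The essential step is then to extend $\gamma$ from $U_{0}\cap U_{1}$ to a smooth map $\delta\colon U_{1}\to SU(2)$: since $U_{0}\cap U_{1}$ is homotopy equivalent to a circle inside the disc $U_{1}$ and $\pi_{1}(SU(2))=\pi_{1}(S^{3})=0$, the loop $\gamma$ is null-homotopic and therefore extends over the disc. This is the only point where the topology of the structure group intervenes, and it is the heart of the argument. Because $\delta$ is pulled back from the base, $V(\delta\, a^{(1)})=\delta\,V(a^{(1)})=(\delta\, a^{(1)})g$, so replacing $a^{(1)}$ by $\delta\, a^{(1)}$ preserves the equation while making the two local solutions agree on $\pi^{-1}(U_{0}\cap U_{1})$; they then glue to a global smooth $a\colon SM\to SU(2)$ with $a^{-1}V(a)=g$. (More conceptually: the fibrewise solutions of $V(a)=ag$ form a principal $SU(2)$-bundle over $M$, two solutions over a point differing by left translation by a unique element of $SU(2)$, and every principal $SU(2)$-bundle over a surface is trivial; a global section of this bundle is precisely the desired $a$.) The main obstacle is therefore not any computation but this globalisation, which ultimately rests on the triviality of $SU(2)$-bundles over a surface, i.e.\ on $\pi_{1}(SU(2))=0$.
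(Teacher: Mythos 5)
Your proof is correct, but it takes a genuinely different route from the paper's. The paper is explicit: it splits $\C^2=L\oplus U$ into the $\pm i$ eigenbundles of $g$, picks sections $\alpha\in \Omega^{1,0}(M,\C)$ and $\beta\in\Omega^{1,0}(M,L^*U)$ with $|\alpha|^2+|\beta|^2=1$ (existence being a small transversality argument: choose $\tilde\beta$ with isolated zeros, $\tilde\alpha$ nonvanishing there, and normalise), and writes $a=a_{-1}+a_1$ down in block form; the identity $ag=V(a)$ is then a one-line check. Your argument is soft: fibrewise exponential solutions $e^{\theta g}$ (periodic because $g^2=-\mbox{\rm Id}$) glued by the triviality of principal $SU(2)$-bundles over a surface. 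What the paper's construction buys is that $a$ is visibly of degree one with $\ker a_{-1}=L$ and $\ker a_{1}=U$, which is exactly what is invoked later in the degree-lowering argument; your $a$ in fact has the same properties automatically (project $V(a)=ag$ onto Fourier modes: $ika_k=a_kg$ forces $a_k=0$ for $k\neq\pm1$ and $a_1(g-i)=a_{-1}(g+i)=0$), but that deserves a remark if your version were to replace the paper's. What your version buys is conceptual clarity about where the topology enters ($\pi_1(SU(2))=0$), and it generalises immediately to other simply connected structure groups. One small point to tighten in the two-chart version: the gluing requires the extension $\delta$ to agree with $\gamma$ on \emph{all} of $U_0\cap U_1$, not merely to extend its homotopy class from a core circle, so you should either choose the charts so that $U_0\cap U_1$ is an honest annulus and extend inward over the inner disc, or simply rely on your parenthetical principal-bundle formulation, which is airtight as stated.
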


\begin{proof}

Let $L(x)$ (resp. $U(x)$) be the eigenspace corresponding to the eigenvalue $i$ (resp. $-i$) of $g(x)$. 
We have an orthogonal decomposition $\C^{2}=L(x)\oplus U(x)$ for every
$x\in M$.
Consider sections $\alpha\in \Omega^{1,0}(M,\C)$ and
$\beta\in \Omega^{1,0}(M,\mbox{\rm Hom}(L,U))=\Omega^{1,0}(M,L^*U)$ such that
$|\alpha|^2+|\beta|^2=1$. Such a pair of sections always exists; for example, we can choose a section $\tilde{\beta}$ with a finite number of isolated zeros
and then choose $\tilde{\alpha}$ such that it does not vanish on the zeros
of $\tilde{\beta}$. Then we
set $\alpha:=\tilde{\alpha}/(|\tilde{\alpha}|^2+|\tilde{\beta}|^2)^{1/2}$ and $\beta:=\tilde{\beta}/(|\tilde{\alpha}|^2+|\tilde{\beta}|^2)^{1/2}$.
Note that $\bar{\alpha}\in  \Omega^{0,1}(M,\C)$
and $\beta^*\in \Omega^{0,1}(M,\mbox{\rm Hom}(U,L))=\Omega^{0,1}(M,U^*L)$. 
Using the orthogonal decomposition we define $a:SM\to SU(2)$ by
 $$a(x,v)=\left(\begin{array}{cc}
\alpha(x,v)&\beta^*(x,v)\\
-\beta(x,v)&\bar{\alpha}(x,v)\\
\end{array}\right).$$
Clearly $a=a_{-1}+a_{1}$, where
 $$a_{1}=\left(\begin{array}{cc}
\alpha&0\\
-\beta&0\\
\end{array}\right)$$
and
 $$a_{-1}=\left(\begin{array}{cc}
0&\beta^*\\
0&\bar{\alpha}\\
\end{array}\right).$$
It is straightforward to check that $ag=V(a)$.

\end{proof}

Now let $u:=ab:SM\to SU(2)$ and let $F:=(ab)^{-1}V(ab)=b^{-1}g\,b+f$.

\medskip

\noindent{\bf Question.} When does $F$ satisfy (\ref{mypde})?

\medskip

If it does, then it defines (via Theorem \ref{theorem:1-1}) a new cohomologically trivial connection given
by
\[A_{F}=-X(ab)(ab)^{-1}=-X(a)a^{-1}+aAa^{-1},\]
where $A$ is the cohomologically trivial connection associated to
$f$. 

Recall that the connection $A$ defines a covariant derivative $d_{A}g=dg+[A,g]$.

\begin{Lemma} $F$ satisfies (\ref{mypde}) if and only if
\begin{equation}
-\star d_{A}g=(d_{A}g)\,g.
\label{gmero}
\end{equation}
\label{lemma:gmero}
\end{Lemma}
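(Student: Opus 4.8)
The plan is to combine the correspondence of Theorem~\ref{theorem:1-1} with a Fourier decomposition in the fibre angle $\theta$. Put $u:=ab$, so $F=u^{-1}V(u)$ automatically; applying the forward and backward constructions in the proof of Theorem~\ref{theorem:1-1} with this $u$ (note $X(u)+A_{F}u=0$ holds by construction and $A_{F}=-X(u)u^{-1}$ takes values in $\mathfrak{su}(2)$ since $u\in SU(2)$), we see that $F$ solves (\ref{mypde}) if and only if $A_{F}$ is a genuine connection on $M$, i.e. $V^{2}(A_{F})+A_{F}=0$. Now $A_{F}=-X(a)a^{-1}+aAa^{-1}$; since $a=a_{1}+a_{-1}$ and $A=A_{1}+A_{-1}$ have Fourier degree $1$ and $X=\eta_{+}+\eta_{-}$ shifts degree by $\pm1$, the function $A_{F}$ lives in degrees $\{-3,-1,1,3\}$, and being $\mathfrak{su}(2)$-valued its degree $-3$ part is minus the adjoint of its degree $3$ part. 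So everything reduces to $(A_{F})_{3}=0$, and reading off the degree $3$ parts (with $a^{-1}=a^{*}$ and $(a^{*})_{1}=a_{-1}^{*}$) gives
\[
(A_{F})_{3}=-\eta_{+}(a_{1})\,a_{-1}^{*}+a_{1}A_{1}a_{-1}^{*}.
\]

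The crucial step is to recast this in covariant form. Matching degrees in $ag=V(a)$ (Lemma~\ref{lemma:int}) gives $a_{1}g=ia_{1}$, $a_{-1}g=-ia_{-1}$, hence also $ga_{-1}^{*}=-ia_{-1}^{*}$; thus $a_{1}$ annihilates the $(-i)$-eigenbundle $U$ of $g$ while $a_{-1}^{*}$ has image in $U$, and since neither vanishes (recall $|\alpha|^{2}+|\beta|^{2}=1$), in fact $\ker a_{1}=U=\operatorname{im}a_{-1}^{*}$, both of rank one. Applying the derivation $\eta_{+}$ to $a_{1}g=ia_{1}$ and using $g-i\,\mbox{\rm Id}=-2iP_{U}$, with $P_{U}$ the orthogonal projection onto $U$, yields $\eta_{+}(a_{1})P_{U}=-\tfrac{i}{2}a_{1}\eta_{+}(g)$; since $P_{U}a_{-1}^{*}=a_{-1}^{*}$ the first term above equals $-\tfrac{i}{2}a_{1}\eta_{+}(g)a_{-1}^{*}$. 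Writing $\eta_{+}(g)=(d_{A}g)^{1,0}-[A_{1},g]$, the degree $1$ part of $d_{A}g=dg+[A,g]$, and moving the $g$'s through via $a_{1}g=ia_{1}$ and $ga_{-1}^{*}=-ia_{-1}^{*}$, the $[A_{1},g]$ and $A_{1}$ contributions cancel, leaving
\[
(A_{F})_{3}=\tfrac{i}{2}\,a_{1}\,(d_{A}g)^{1,0}\,a_{-1}^{*}.
\]

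It remains to match this with (\ref{gmero}). Since $g^{2}=-\mbox{\rm Id}$, $d_{A}g$ anticommutes with $g$, so $d_{A}g$ and its $(1,0)$-part interchange the eigenbundles $L$ and $U$. Using $\operatorname{im}a_{-1}^{*}=U=\ker a_{1}$, the last display vanishes if and only if $(d_{A}g)^{1,0}$ maps $U$ into $U$, which together with $(d_{A}g)^{1,0}(U)\subseteq L$ forces $(d_{A}g)^{1,0}|_{U}=0$. On the other hand, decompose $d_{A}g=\phi+\psi$ with $\phi\colon L\to U$ and $\psi\colon U\to L$; then $(d_{A}g)g=i\phi-i\psi$, and since $\star$ acts by $-i$ on $(1,0)$- and $+i$ on $(0,1)$-forms, (\ref{gmero}) says exactly that $\phi$ is of type $(1,0)$ and $\psi$ of type $(0,1)$ --- two requirements equivalent to each other because $d_{A}g\in\mathfrak{su}(2)$ gives $\psi=-\phi^{*}$. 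Hence (\ref{gmero}) holds iff $\psi$ is of type $(0,1)$, i.e. iff $(d_{A}g)^{1,0}|_{U}=0$, which is precisely $(A_{F})_{3}=0$. The one genuinely computational point, and the step I expect to be the main obstacle, is the second displayed identity; the rest is the correspondence of Theorem~\ref{theorem:1-1} together with linear algebra on the $\pm i$-eigenspaces of $g$.
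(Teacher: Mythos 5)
Your proof is correct, but it takes a genuinely different route from the paper's. The paper proves Lemma \ref{lemma:gmero} by brute force: it computes $X(F)$, $H(F)$, $VX(F)$ and $[X(F),F]$ directly from $F=b^{-1}gb+f$ using $A=-X(b)b^{-1}$, $H(b)=(\star A)b-bX(f)$, $V(g)=0$ and the PDE for $f$, finds that everything cancels down to $H(g)+VX(g)-2[\star A,g]=[[A,g]+X(g),g]$, and then rewrites this as (\ref{gmero}) using $g^2=-\mbox{\rm Id}$. You instead invoke both directions of Theorem \ref{theorem:1-1} to trade ``$F$ solves (\ref{mypde})'' for ``$V^2(A_F)+A_F=0$'', and then do Fourier analysis in the fibre: since $A_F$ a priori lives in degrees $\{-3,-1,1,3\}$ and is skew-Hermitian, the whole condition is $(A_F)_3=0$, which you compute to be $\tfrac{i}{2}a_1(d_Ag)^{1,0}a_{-1}^*$ (I checked the cancellation of the $A_1$ terms via $a_1g=ia_1$, $ga_{-1}^*=-ia_{-1}^*$; it is right, as is the eigenspace bookkeeping at the end). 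Both arguments are sound; yours leans on the explicit degree-one structure of $a$ from Lemma \ref{lemma:int} and on the backward direction of Theorem \ref{theorem:1-1} (which the paper only cites from \cite{P}), whereas the paper's computation never mentions $a$ at all. What your route buys is transparency about the mechanism: it exhibits the obstruction as the single top Fourier mode of $A_F$ and identifies its vanishing directly with $(d_Ag)^{1,0}|_U=0$, i.e.\ with the $\bar{\partial}_A$-holomorphicity of $L$ --- in effect you have merged Lemma \ref{lemma:gmero} with the content of Lemma \ref{lemma:eq}, and you have made visible exactly why the same transformation lowers degree in Section 5. The paper's route is more elementary and keeps the intermediate zero-curvature-type identity for $g$ explicit.
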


\begin{proof}

Starting  with $F=b^{-1}g\,b+f$
and using that $A=-X(b)b^{-1}=bX(b^{-1})$ we compute
\[X(F)=b^{-1}\left([A,g]+X(g)\right)b+X(f).\]
Similarly, using $H(b)=(\star A)b-bX(f)$ we find
\[H(F)=b^{-1}\left([-\star A,g]+H(g)\right)b+[X(f), b^{-1}g\,b]+H(f).\]
Now we compute $VX(F)$; here we use that $V(g)=0$.
We obtain
\[VX(F)=[b^{-1}([A,g]+X(g))b,f]+b^{-1}\left([-\star A,g]+VX(g)\right)b+VX(f).\]
The last term we need for (\ref{mypde}) is:

\[[X(F),F]=b^{-1}[[A,g]+X(g),g]b+[b^{-1}([A,g]+X(g))b,f]+
[X(f),b^{-1}g\,b]+[X(f),f].\]
Since $f$ satisfies (\ref{mypde}) we see that $F$ satisfies
(\ref{mypde}) if and only if
\[H(g)+VX(g)-2[\star A,g]=[[A,g]+X(g),g].\]
Since $g$ depends only on the base point and $g^{2}=-\mbox{\rm Id}$ we can rewrite this
as
\[-2\star(dg+[A,g])=[dg+[A,g],g]=2(dg+[A,g])\,g.\]
Thus $F$ satisfies (\ref{mypde}) if and only
if
\[-\star d_{A}g=(d_{A}g)\,g\]
as claimed.

\end{proof}

We will now rephrase equation (\ref{gmero}) in terms of holomorphic line bundles.
Recall that the connection $A$ induces a holomorphic structure on the trivial bundle $M\times \C^2$
and on the endomorphism bundle $M\times \M_{2}(\C)$. 
We have an operator $\bar{\partial}_{A}=(d_{A}-i\star d_{A})/2=\bar{\partial}+[A_{-1},\cdot]$
acting on sections $f:M\to \M_{2}(\C)$.
 
Set $\pi:=(\mbox{\rm Id}-ig)/2$ and $\pi^{\perp}=(\mbox{\rm Id}+ig)/2$ so that
$\pi+\pi^{\perp}=\mbox{\rm Id}$.
Let $L(x)$ be as above the eigenspace corresponding 
to the eigenvalue $i$ of $g(x)$. Note that $\pi$ is the Hermitian orthogonal projection
over $L(x)=\mbox{\rm Image}(\pi(x))$.

\begin{Lemma} Let $g:M\to \mathfrak{su}(2)$ be a smooth map with $\det g=1$.
The following are equivalent:
\begin{enumerate}
\item $-\star d_{A}g=(d_{A}g)g$;
\item $L$ is a $\bar{\partial}_{A}$-holomorphic line bundle;
\item $\pi^{\perp}\bar{\partial}_{A}\pi=0$.
\end{enumerate}
\label{lemma:eq}
\end{Lemma}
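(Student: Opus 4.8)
The plan is to rephrase each of the three conditions as a statement about the $(0,1)$-part of $d_Ag$, using the bidegree decomposition of $\M_2(\C)$-valued $1$-forms on the surface. Write $\partial_A:=(d_A+i\star d_A)/2$, so that $d_A=\partial_A+\bar\partial_A$ and, with the paper's sign convention, the Hodge star acts on $1$-forms as $-i$ on $(1,0)$-parts and as $+i$ on $(0,1)$-parts (this sign is forced by $\bar\partial_A=(d_A-i\star d_A)/2$). I would first record three elementary facts. (i) Since $g^2=-\mathrm{Id}$ we have $g=i(2\pi-\mathrm{Id})$, hence $d_Ag=2i\,d_A\pi$ and $\bar\partial_Ag=2i\,\bar\partial_A\pi$. (ii) From $d_A(\pi^2)=d_A\pi$ and its $\partial_A,\bar\partial_A$ analogues (these operators being derivations on endomorphism-valued forms) one gets $(d_A\pi)\pi+\pi(d_A\pi)=d_A\pi$, and each of $d_A\pi,\partial_A\pi,\bar\partial_A\pi$ has vanishing ``diagonal'' parts $\pi(\,\cdot\,)\pi=\pi^\perp(\,\cdot\,)\pi^\perp=0$; in particular $\pi^\perp\bar\partial_A\pi=\pi^\perp(\bar\partial_A\pi)\pi$, so (3) is equivalent to $\bar\partial_A\pi=\pi(\bar\partial_A\pi)\pi^\perp$. (iii) Because $A$ is $\mathfrak{u}(2)$-valued and $\pi^*=\pi$, the form $d_A\pi$ is Hermitian-valued, so its $(1,0)$- and $(0,1)$-parts are mutually adjoint: $\partial_A\pi=(\bar\partial_A\pi)^*$, the $*$ being matrix adjoint together with complex conjugation of the form. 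Fact (iii) is the only place where \emph{unitarity} of the connection is used.

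For (1)$\Leftrightarrow$(3): substituting $g=i(2\pi-\mathrm{Id})$ and $d_Ag=2i\,d_A\pi$ into $-\star d_Ag=(d_Ag)g$ and simplifying with $(d_A\pi)\pi=d_A\pi-\pi(d_A\pi)$ collapses (1) to the single identity $\pi\,d_A\pi=\frac{1}{2}(d_A\pi-i\star d_A\pi)=\bar\partial_A\pi$. Splitting into bidegrees, $\pi\,d_A\pi=\bar\partial_A\pi$ holds iff $\pi\,\partial_A\pi=0$ and $\pi^\perp\bar\partial_A\pi=0$ (a $(1,0)$-form equal to a $(0,1)$-form must vanish). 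The second of these is exactly (3). Conversely, given (3), fact (ii) gives $\bar\partial_A\pi=\pi(\bar\partial_A\pi)\pi^\perp$, and applying $*$ together with fact (iii) gives $\partial_A\pi=\pi^\perp(\partial_A\pi)\pi$, whence $\pi\,\partial_A\pi=0$. So (1) and (3) are equivalent.

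For (2)$\Leftrightarrow$(3): the operator $\bar\partial_A$ endows $M\times\C^2$ with a holomorphic structure, and $L=\mathrm{Image}(\pi)$ is a holomorphic subbundle precisely when $\bar\partial_A$ sends local sections of $L$ to sections of $L$. For a section $s$ with $\pi s=s$, the Leibniz rule gives $\bar\partial_As=(\bar\partial_A\pi)s+\pi\,\bar\partial_As$, and the last term already lies in $L$; hence $\bar\partial_As\in\Gamma(L)$ for every such $s$ iff $\pi^\perp(\bar\partial_A\pi)\pi=0$, which by fact (ii) is the same as $\pi^\perp\bar\partial_A\pi=0$. (This is the standard criterion: $L$ is holomorphic iff the $(0,1)$-part $\pi^\perp\bar\partial_A\pi$ of its second fundamental form vanishes.) The one part requiring care is the bookkeeping of the Hodge-star signs and of the constant $2i$ relating $g$ to $\pi$ when checking that (1) collapses to $\pi\,d_A\pi=\bar\partial_A\pi$; I would expect this to be the main, though still elementary, obstacle.
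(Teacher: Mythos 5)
Your proof is correct and follows essentially the same route as the paper: both arguments reduce condition (1) to the vanishing of the $(0,1)$-component $\pi^{\perp}\bar{\partial}_{A}\pi$ (the paper by manipulating $\bar{\partial}_{A}g=-ig\bar{\partial}_{A}g$ directly, you by the equivalent substitution $g=i(2\pi-\mathrm{Id})$), and both prove (2)$\Leftrightarrow$(3) via the Leibniz rule applied to $\pi^2=\pi$. Your version is slightly more explicit about where unitarity enters in the converse direction (1)$\Leftarrow$(3), which the paper leaves implicit in the reversibility of its chain of identities.
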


\begin{proof}

Suppose that (1) holds. Apply $\star$ to obtain: $d_{A}g=(\star d_{A}g)\,g$.
Thus $d_{A}g-i\star d_{A}g=i(d_{A}g-i\star d_{A}g)g$.
In other words $\bar{\partial}_{A}g=i(\bar{\partial}_{A}g)\,g=-ig(\bar{\partial}_{A}g)$ (recall
that $g^2=-\mbox{\rm Id}$).
Since $\pi=(\mbox{\rm Id}-ig)/2$, then $\bar{\partial}_{A}g=-ig(\bar{\partial}_{A}g)$ 
is equivalent to $\pi^{\perp}\bar{\partial}_{A}\pi=0$ which is (3).

Using the condition $\pi^2=\pi$, we see that $\pi^{\perp}\bar{\partial}_{A}\pi=0$ is equivalent
to $(\bar{\partial}_{A}\pi)\pi=0$. The line bundle $L$ is holomorphic iff given
a local section $\xi$ of $L$, then $\bar{\partial}_{A}\xi\in L$. Using that $\pi\xi=\xi$ we see that
$\bar{\partial}_{A}\xi\in L$ iff $(\bar{\partial}_{A}\pi)\xi=0$.
Clearly, this happens iff $(\bar{\partial}_{A}\pi)\pi=0$ and thus (2) holds iff (3) holds.

\end{proof}

The next theorem summarises the B\"acklund transformation introduced in this section and it follows directly from Lemmas \ref{lemma:gmero} and \ref{lemma:eq} and Theorem \ref{theorem:1-1}.

\begin{Theorem} Let $A$ be a cohomologically trivial connection and let $L$ be
a holomorphic line subbundle of the trivial bundle $M\times\mathbb C^2$ with respect to the complex structure induced by $A$. Define a map
$g:M\to \mathfrak{su}(2)$ with $\det g=1$ by declaring $L$ to be its eigenspace with
eigenvalue $i$. Consider $a:SM\to SU(2)$ with $g=a^{-1}V(a)$ as given by Lemma \ref{lemma:int}. Then
\[A_{F}:=-X(a)a^{-1}+aAa^{-1}\]
defines a cohomologically trivial connection.
\label{cor:back}

\end{Theorem}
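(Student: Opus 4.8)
The plan is to assemble the statement directly from the three lemmas and the correspondence already in hand, so the ``proof'' is essentially a bookkeeping argument that tracks how each hypothesis feeds into the next link of the chain. First I would start with the data of the theorem: a cohomologically trivial connection $A$ and an $A$-holomorphic line subbundle $L\subset M\times\C^2$. By Theorem \ref{theorem:1-1} the connection $A$ carries with it a function $u_{A}:SM\to SU(2)$ solving $X(u_{A})+Au_{A}=0$, and the associated $f:=u_{A}^{-1}V(u_{A})$ solves the PDE (\ref{mypde}); in the notation of Section \ref{bt} we take $b:=u_{A}$, so that $A=-X(b)b^{-1}$ and $-\star A=V(A)=-bX(f)b^{-1}-H(b)b^{-1}$, exactly the setup under which Lemmas \ref{lemma:gmero} and \ref{lemma:eq} were derived.

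Next I would feed in the line bundle. Declaring $L$ to be the $i$-eigenspace of $g$ is precisely condition (2) of Lemma \ref{lemma:eq} with $\pi$ the orthogonal projection onto $L$ and $g=i(2\pi-\mathrm{Id})=i\mathrm{Id}-2i\pi$; since $L$ is a line bundle and $U:=L^{\perp}$ its orthogonal complement, $g$ is a smooth map $M\to\mathfrak{su}(2)$ with $g^2=-\mathrm{Id}$, i.e. $\det g=1$, so $g$ is a legitimate input for Lemma \ref{lemma:int}. Because $L$ is $\bar{\partial}_{A}$-holomorphic, Lemma \ref{lemma:eq} (2)$\Rightarrow$(1) gives $-\star d_{A}g=(d_{A}g)g$, which is equation (\ref{gmero}). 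Now apply Lemma \ref{lemma:int} to produce $a:SM\to SU(2)$ with $g=a^{-1}V(a)$, and set $u:=ab=au_{A}$, $F:=(ab)^{-1}V(ab)=b^{-1}gb+f$ as in the text preceding the Question. By Lemma \ref{lemma:gmero}, equation (\ref{gmero}) is equivalent to $F$ satisfying the PDE (\ref{mypde}).

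Finally I would close the loop with Theorem \ref{theorem:1-1} again, in the backward direction: since $F$ solves (\ref{mypde}) and $F=u^{-1}V(u)$ with $u=ab:SM\to SU(2)$, the connection $-X(u)u^{-1}=-X(ab)(ab)^{-1}$ is cohomologically trivial, i.e. it lies in $\cA_{0}$ and the associated cocycle is cohomologically trivial via $u$. It remains only to rewrite $-X(ab)(ab)^{-1}$ in the advertised form: expanding $X(ab)=X(a)b+aX(b)$ and using $X(b)=-Ab$ together with $(ab)^{-1}=b^{-1}a^{-1}$ gives $-X(ab)(ab)^{-1}=-X(a)a^{-1}+aAa^{-1}=A_{F}$, matching the text after the Question. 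This identity is the one genuinely computational point, but it is a one-line manipulation using $A=-X(b)b^{-1}$, so there is no real obstacle: the content of the theorem is entirely carried by Lemmas \ref{lemma:int}, \ref{lemma:gmero}, \ref{lemma:eq} and Theorem \ref{theorem:1-1}, and the proof is the observation that the hypothesis ``$L$ is $A$-holomorphic'' is exactly the hypothesis those lemmas need. If anything is delicate it is checking that the $g$ built from the projection $\pi$ is smooth and that Lemma \ref{lemma:int} applies verbatim — but $\pi$ is smooth because $L$ is a smooth subbundle, and $g^2=-\mathrm{Id}$ is immediate from $\pi^2=\pi$, so this too is routine.
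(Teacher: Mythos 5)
Your proof is correct and follows exactly the route the paper intends: the paper itself says the theorem ``follows directly from Lemmas \ref{lemma:gmero} and \ref{lemma:eq} and Theorem \ref{theorem:1-1}'', and your chain (holomorphicity of $L$ $\Rightarrow$ equation (\ref{gmero}) via Lemma \ref{lemma:eq} $\Rightarrow$ $F=b^{-1}g\,b+f$ solves (\ref{mypde}) via Lemma \ref{lemma:gmero} $\Rightarrow$ $A_{F}=-X(ab)(ab)^{-1}$ is cohomologically trivial via the backward direction of Theorem \ref{theorem:1-1}) is precisely that argument, with the correct closing identity $-X(ab)(ab)^{-1}=-X(a)a^{-1}+aAa^{-1}$. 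The only blemish is the sign slip $i(2\pi-\mathrm{Id})=i\,\mathrm{Id}-2i\pi$ (the right-hand side is $-g$); the first expression is the correct one and is the one you actually use.
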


\begin{Remark}{\rm Note that if the geodesic flow is transitive, two solutions
$u,w$ of $X(u)+Au=0$ are related by $u=wg$ where $g$ is a constant unitary
matrix, because $X(w^{-1}u)=0$. Thus the degrees of $u$ and $w$ are the same. We can then talk
about the ``degree'' of a cohomologically trivial connection as the degree
of any solution of $X(u)+Au=0$.
}
\end{Remark}

If we start, for example, with the trivial connection $A=0$ (which is obviously transparent), then
a map $g:M\to\mathfrak{su}(2)$ with $\det g=1$ and $-\star dg=(dg)g$ can be identified with a meromorphic
function. The connections of degree one $A_{F}=-X(a)a^{-1}$ given by Theorem \ref{cor:back} were found
in \cite{P}. To some extent, the point of this paper is to show that one can continue this process and
also run it ``backwards'' to decrease the degree of a transparent connection.
In the next section we will show that any cohomologically trivial connection such that the associated $u$ has
a finite Fourier series can be built up by successive applications of the transformation
described in Theorem \ref{cor:back}, provided that the geodesic flow is transitive.

\section{Lowering degree using B\"acklund transformations}

Let $A$ be a transparent connection with $A=-X(b)b^{-1}$ and
$f=b^{-1}V(b)$, where $b:SM\to SU(2)$, is as in the previous section.

We first make some remarks concerning the $SU(2)$-structure. Let
$j:\mathbb C^2\to\mathbb C^2$ be the antilinear map given by
\[j(z_1,z_2)=(-\bar{z}_{2},\bar{z}_{1}).\]
If we think of a matrix $a\in SU(2)$ as a linear map $a:\mathbb C^2\to\mathbb C^2$,
then $ja=aj$. This implies that given $b:SM\to SU(2)$ with
$b=\sum_{k\in\mathbb Z}b_k$, then $jb_{k}=b_{-k}j$ for all $k\in\mathbb Z$.

\medskip

\noindent{\bf Assumption.} Suppose $b$ has a finite Fourier
expansion, i.e., $b=\sum_{k=-N}^{k=N}b_{k}$, where $N\geq 1$.
By Theorem \ref{theorem:finite} we know that this holds if $M$ has negative curvature.

\medskip
Let us assume also that $N$ is the degree of $b$ and thus both $b_{N}$ and $b_{-N}=-jb_{N}j$ are
non-zero.

The unitary condition $bb^*=b^* b=\mbox{\rm Id}$ implies that 
$b_{N}b^{*}_{-N}=b^{*}_{-N}b_{N}=0$. These relations imply that
the rank of $b_{-N}$ and $b_{N}$ is at most one and equals
one on an open set, which, as we will see shortly, must be all of $M$ except for perhaps
a finite number of points.

 Consider now a fixed vector $\xi\in \mathbb C^2$ such
that $s(x,v):=b_{-N}(x,v)\xi\in \mathbb C^2$ is not zero identically.
Clearly $s$ can be seen as a section of $(M\times \mathbb C^2)\otimes K^{\otimes -N}$.
We may write $b_{-N}$ in local isothermal coordinates as $b_{-N}=he^{-iN\theta}$,
using the notation from
Section \ref{prelim}.
We can thus write $s$ locally as $s=e^{N\lambda}h\xi(d\bar{z})^N$.

\begin{Lemma} The local section $e^{-2N\lambda}s$ is $\bar{\partial}_{A}$-holomorphic.
\end{Lemma}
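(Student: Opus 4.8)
The plan is to extract the lowest Fourier mode of the equation $X(b)+Ab=0$ and then recognise the $\bar\partial_{A}$-holomorphicity directly from the local formula (\ref{eq:mu}).

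First I would rewrite $A=-X(b)b^{-1}$ as $X(b)+Ab=0$, that is, $\mu_{+}(b)+\mu_{-}(b)=0$, using $X=\eta_{+}+\eta_{-}$, $A=A_{1}+A_{-1}$ and that the twisting terms act by left multiplication. Decomposing into Fourier components, for every $m$ the $\Omega_{m}$-part of this identity is $\mu_{+}(b_{m-1})+\mu_{-}(b_{m+1})=0$. Taking $m=-N-1$ and using the Assumption that $b$ has degree $N$, so that $b_{-N-1}=b_{-N-2}=0$, the first term vanishes and we obtain $\mu_{-}(b_{-N})=0$.

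Next I would pass to isothermal coordinates and write $b_{-N}=h\,e^{-iN\theta}$ as in Section \ref{prelim}. Substituting $n=-N$ into (\ref{eq:mu}), the relation $\mu_{-}(b_{-N})=0$ becomes, after dividing by the nowhere-zero factor $e^{(N-1)\lambda}e^{-i(N+1)\theta}$,
\[\bar\partial\bigl(h\,e^{-N\lambda}\bigr)+A_{\bar z}\,h\,e^{-N\lambda}=0 .\]
Multiplying on the right by the fixed vector $\xi$, which commutes with $\bar\partial$ and with left multiplication by $A_{\bar z}$, gives $\bar\partial(h\xi\, e^{-N\lambda})+A_{\bar z}\,h\xi\,e^{-N\lambda}=0$. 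Since locally $s=e^{N\lambda}h\xi(d\bar z)^{N}$, the section $e^{-2N\lambda}s$ has local coefficient $h\xi\,e^{-N\lambda}$ in the frame $(d\bar z)^{N}$, and the displayed equation says exactly that this coefficient is killed by $\bar\partial+A_{\bar z}$; that is, $e^{-2N\lambda}s$ is $\bar\partial_{A}$-holomorphic as a section of $(M\times\C^{2})\otimes K^{\otimes -N}$.

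The only step requiring care is this last identification: one must keep track of the powers of $e^{\lambda}$ entering the identification of $\Omega_{-N}$ with smooth sections of $(M\times\C^{2})\otimes K^{\otimes -N}$ from Section \ref{prelim}, so as to see that it is the twisted section $e^{-2N\lambda}s$, and not $s$ itself, whose coefficient in the frame $(d\bar z)^{N}$ is acted on by the plain operator $\bar\partial+A_{\bar z}=\bar\partial_{A}$. Everything else is the Fourier-mode extraction together with the substitution into (\ref{eq:mu}).
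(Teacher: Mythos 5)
Your proposal is correct and follows essentially the same route as the paper: extract the $\Omega_{-N-1}$ Fourier mode of $\mu_{+}(b)+\mu_{-}(b)=0$ to get $\mu_{-}(b_{-N})=0$, then read off holomorphicity of $e^{-2N\lambda}s$ from the local formula (\ref{eq:mu}) after contracting with $\xi$. The extra care you take with the powers of $e^{\lambda}$ in the identification of $\Omega_{-N}$ with sections of $(M\times\C^{2})\otimes K^{\otimes -N}$ is exactly the point the paper's proof relies on implicitly.
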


\begin{proof}Using the operators $\mu_{\pm}$ introduced in Section \ref{prelim} we can write $X(b)+Ab=0$ as
\[\mu_{+}(b_{k-1})+\mu_{-}(b_{k+1})=0\]
for all $k$. This gives $\mu_{+}(b_{N})=\mu_{-}(b_{-N})=0$.
But $\mu_{-}(b_{-N})=0$ is saying that $e^{-2N\lambda}s$ is $\bar{\partial}_{A}$-holomorphic. 
Indeed, using (\ref{eq:mu}), we see that $\mu_{-}(b_{-N})=0$ implies
\[ \bar{\partial}(he^{-N\lambda})+A_{\bar{z}}he^{-N\lambda}=0\]
which in turn implies
\[ \bar{\partial}(e^{-N\lambda}h\xi)+A_{\bar{z}}e^{-N\lambda}h\xi=0.\]
This equation says that $e^{-2N\lambda}s=e^{-N\lambda}h\xi(d\bar{z})^N$ is $\bar{\partial}_{A}$-holomorphic.

\end{proof}

The section $s$ spans a line bundle $L$ over $M$ which by the previous lemma is 
$\bar{\partial}_{A}$-holomorphic. The section $s$ may have zeros, but at a zero $z_0$, the line bundle extends holomorphically. Indeed, in a neighbourhood
of $z_0$ we may write
$e^{-2N\lambda(z)}s(z)=(z-z_0)^k w(z)$, where $w$ is a local holomorphic
section with $w(z_0)\neq 0$. The section $w$ spans a holomorphic line subbundle which coincides with the one spanned by $s$ off $z_0$.
Therefore $L$ is a $\bar{\partial}_{A}$-holomorphic line bundle that contains the image of $b_{-N}$
(and $U=jL$ is an anti-holomorphic line bundle that contains the image of $b_{N}$).
We summarise this in a lemma:

\begin{Lemma} The line bundle $L$ determined by the image of $b_{-N}$
is $\bar{\partial}_{A}$-holomorphic.
\label{lemma:holo}
\end{Lemma}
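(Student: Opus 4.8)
The plan is to reduce the statement to the previous lemma, which already shows that the local section $e^{-2N\lambda}s$ is $\bar{\partial}_{A}$-holomorphic, together with the standard fact that a holomorphic line \emph{subbundle} of a holomorphic vector bundle is recovered from a local holomorphic section with isolated zeros by filling in the zeros. So the real content of this lemma is two-fold: first, that the sub-\emph{sheaf} generated by $s$ (equivalently by $b_{-N}\xi$ for suitable $\xi$) is genuinely a sub\emph{bundle} away from a finite set, and second, that across that finite set $L$ extends to an honest $\bar\partial_A$-holomorphic line subbundle of $M\times\C^2$.

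First I would fix $\xi\in\C^2$ so that $s=b_{-N}\xi$ is not identically zero, which is possible because $b_{-N}\neq 0$ by the choice of $N$ as the degree of $b$. As noted in the excerpt, the unitarity relation $b_Nb_{-N}^*=b_{-N}^*b_N=0$ forces $\operatorname{rk}b_{-N}\le 1$ everywhere and $=1$ on an open set; hence $s(x)\neq 0$ on a nonempty open set $\Omega_0\subset M$. Over $\Omega_0$, $s$ spans a rank-one subbundle $L$ of $M\times\C^2$, and by the previous lemma (applied in any isothermal chart) the local sections $e^{-2N\lambda}s$ that present $s$ are $\bar\partial_A$-holomorphic, so $L$ is a $\bar\partial_A$-holomorphic subbundle over $\Omega_0$. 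The zero set of $e^{-2N\lambda}s$ is then the zero set of a nonzero local holomorphic section of a holomorphic line bundle, hence isolated; combined with compactness of $M$ this shows the zeros of $s$ form a finite set $\{z_1,\dots,z_m\}$ and $\Omega_0=M\setminus\{z_1,\dots,z_m\}$.

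Next, at each $z_j$ I would run the standard extension argument sketched in the paragraph preceding the lemma: choosing a holomorphic coordinate $z$ centred at $z_j$ and a holomorphic frame for the $A$-holomorphic structure near $z_j$, write $e^{-2N\lambda(z)}s(z)=(z-z_j)^{k_j}w_j(z)$ with $w_j$ a local $\bar\partial_A$-holomorphic section and $w_j(z_j)\neq 0$; this is possible because each component of $e^{-2N\lambda}s$ in the holomorphic frame is an ordinary holomorphic function, so one factors out the common vanishing order $k_j=\min$ of the orders of the components. The span of $w_j$ defines a $\bar\partial_A$-holomorphic line subbundle near $z_j$ which agrees with $\mathrm{span}(s)=L$ on the punctured neighbourhood, so $L$ extends across $z_j$. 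Doing this at every $z_j$ glues the pieces into a globally defined $\bar\partial_A$-holomorphic line subbundle $L\subset M\times\C^2$ whose restriction to $M\setminus\{z_1,\dots,z_m\}$ is the image of $b_{-N}$, which is exactly the assertion.

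The main obstacle, such as it is, is the extension-across-zeros step: one must be careful that the holomorphic framing used to identify $\bar\partial_A$-sections with honest holomorphic functions is available near each bad point (it is, since $\bar\partial_A$ is an integrable holomorphic structure on the trivial bundle, so local holomorphic frames exist), and that the filled-in line does not depend on the choices of $\xi$, of coordinate, or of frame — this follows because on the overlap it must equal $\mathrm{span}(s)$, which is intrinsic. Everything else is bookkeeping: the identification of $\mu_-(b_{-N})=0$ with $\bar\partial_A$-holomorphicity of $e^{-2N\lambda}s$ is already done in the previous lemma via formula (\ref{eq:mu}), and the finiteness of the zero set is immediate from the identity theorem plus compactness.
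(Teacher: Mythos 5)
Your proposal is correct and follows essentially the same route as the paper: the paper likewise takes the span of $s=b_{-N}\xi$, invokes the preceding lemma ($\mu_-(b_{-N})=0$, i.e.\ $e^{-2N\lambda}s$ is $\bar\partial_A$-holomorphic) to get holomorphicity away from the zeros, and extends across the (isolated, hence finite) zero set by factoring $e^{-2N\lambda(z)}s(z)=(z-z_0)^k w(z)$ with $w(z_0)\neq 0$ and taking the span of $w$. Your additional remarks on the existence of local holomorphic frames for $\bar\partial_A$ and the independence of the extension from the choices made are correct elaborations of steps the paper leaves implicit.
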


We now wish to use the line bundle $L$ to construct an appropriate $g:M\to\mathfrak{su}(2)$
such that when we run the B\"acklund transformation from the previous section
we obtain a cohomologically trivial connection of degree $\leq N-1$.
But first we need the following lemma. Recall that a matrix-valued
function $f:SM\to \M_{n}(\mathbb C)$ is said to be {\it odd} if
$f(x,v)=-f(x,-v)$ and {\it even} if $f(x,v)=f(x,-v)$.

\begin{Lemma} Assume that the geodesic flow is transitive and let
$b:SM\to SU(2)$ solve $X(b)+Ab=0$. Then $b$ is either even or odd.
\label{lemma:oe}
\end{Lemma}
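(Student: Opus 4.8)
The plan is to exploit the antilinear symmetry $j$ together with the transitivity of the geodesic flow. Recall that $jb_k = b_{-k}j$, so the map $b \mapsto \tilde b$ defined by $\tilde b(x,v) := b(x,-v)$ (which flips the sign of every Fourier mode, $\tilde b = \sum_k (-1)^k b_k$) interacts nicely with the $SU(2)$-structure. The key observation is that $\tilde b$ also solves the transport equation, but for a related connection, and then I would use a first-integral argument to conclude $b$ and $\tilde b$ differ by something constant along orbits.

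More precisely, first I would record how the flip $v \mapsto -v$ acts on the framing: it anticommutes with $X$ and $H$ (geodesics traversed backwards) and commutes with $V$ up to a sign depending on conventions; concretely $X(\tilde b)(x,v) = -[X(b)](x,-v)$. Writing $A = a(x,y)\cos\theta + b(x,y)\sin\theta$ in the local coordinates of Section \ref{prelim}, the connection $A$ is itself odd as a function on $SM$ (it is linear in $v$), so $A(x,-v) = -A(x,v)$. Feeding this into $X(b) + Ab = 0$ and evaluating at $-v$ gives $-[X(b)](x,-v) - A(x,v)\,b(x,-v) = 0$, i.e. $X(\tilde b) + A\tilde b = 0$ with the \emph{same} connection $A$. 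Hence both $b$ and $\tilde b$ are solutions of the transport equation for $A$.

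Now I would invoke transitivity exactly as in the Remark following Theorem \ref{cor:back}: two $SU(2)$-valued solutions of $X(u)+Au = 0$ differ by right multiplication by a constant, since $X(b^{-1}\tilde b) = 0$ forces $b^{-1}\tilde b$ to be a first integral of the (transitive) geodesic flow, hence constant. So $\tilde b = b\,c$ for some fixed $c \in SU(2)$. Applying the flip twice is the identity, so $b = \tilde{\tilde b} = \tilde b\, c = b c^2$, giving $c^2 = \mbox{\rm Id}$, i.e. $c = \pm\mbox{\rm Id}$. If $c = \mbox{\rm Id}$ then $\tilde b = b$, meaning $b$ is even; if $c = -\mbox{\rm Id}$ then $\tilde b = -b$, meaning $b$ is odd. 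This is the desired dichotomy.

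The main obstacle I anticipate is purely bookkeeping: getting the action of the antipodal map $v \mapsto -v$ on $X$, $H$, $V$ and on $A$ exactly right in the coordinates $(x,y,\theta)$, since $\theta \mapsto \theta + \pi$ shifts the angle and one must check that the asserted sign $X(\tilde b)(x,v) = -[X(b)](x,-v)$ and the oddness $A(x,-v) = -A(x,v)$ hold with the stated conventions. Once those two identities are pinned down the rest is the short first-integral argument above; transitivity is available since $M$ is assumed negatively curved (Anosov), and the involutivity of the flip delivers $c = \pm\mbox{\rm Id}$ with no further work. One should perhaps also remark that the hypothesis "$b$ solves $X(b)+Ab=0$" is exactly the cohomological triviality of $A$, so the lemma applies in the setting of the preceding discussion where $b = \sum_{k=-N}^N b_k$.
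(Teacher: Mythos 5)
Your proof is correct, but it is a genuinely different argument from the one in the paper. The paper splits $b=b_o+b_e$ into odd and even parts, observes that $X+A$ exchanges parities so the transport equation decouples, shows $X(b_o^*b_o)=X(b_e^*b_e)=X(b_o^*b_e)=0$ and uses transitivity to make these Gram matrices constant (whence $b_o^*b_e=0$, being odd and constant), and finally invokes the quaternionic structure $j$ (via $jb_o=b_oj$, $jb_e=b_ej$) to rule out the rank-one case and force one summand to vanish. You instead treat the flip $\sigma(x,v)=(x,-v)$ as a symmetry of the transport equation: since $d\sigma(X)=-X\circ\sigma$ and $A$ is odd in $v$, the function $\tilde b=b\circ\sigma$ solves the same equation, so by transitivity $\tilde b=bc$ with $c$ constant, and involutivity of $\sigma$ gives $c^2=\mbox{\rm Id}$, which in $SU(2)$ forces $c=\pm\mbox{\rm Id}$ because $\det c=1$ excludes the eigenvalue pair $\{1,-1\}$. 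Both sign identities you flag as the ``main obstacle'' do hold with the paper's conventions ($\theta\mapsto\theta+\pi$ sends $e^{ik\theta}$ to $(-1)^ke^{ik\theta}$, reverses $X$ and $H$, and fixes $V$), so there is no gap. Your route is shorter and isolates exactly where $SU(2)$ enters (the only square roots of the identity are central), and it correctly signals why the dichotomy is special to $SU(2)$: for $SU(n)$, $n\geq 3$, nontrivial involutions $c$ exist and the argument only yields $\tilde b=bc$. The paper's route is longer but produces intermediate invariants ($b_o^*b_o$ and $b_e^*b_e$ constant, $b_o^*b_e=0$) that are closer in spirit to what one would need for a $U(n)$ analysis.
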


\begin{proof} Write $b=b_{o}+b_{e}$ where $b_o$ is odd and $b_e$ is even.
Since the operator $(X+A)$ maps even to odd and odd to even, the
equation $X(b)+Ab=0$ decouples as 
\[X(b_o)+Ab_o=0;\]
\[X(b_e)+Ab_e=0.\]
A calculation using these equations shows that $X(b_o^*b_o)=X(b_e^*b_e)=X(b_{o}^{*}b_{e})=0$. Since
the geodesic flow is transitive, these matrices are all constant. Moreover, since
$b_o^*b_e$ is odd it must be zero. On the other hand $jb=bj$ implies
that $jb_o=b_o j$ and $jb_e=b_e j$, which in turn implies that both $b_o$ and $b_e$
cannot have rank 1. Putting all this together, we see that either $b_o$ or $b_e$
must vanish identically.

\end{proof}

Suppose that the geodesic flow is transitive. By Lemma \ref{lemma:oe},
$b=b_{-N}+d+b_{N}$, where
$d$ has degree $\leq N-2$. We now
seek $a:SM\to SU(2)$ of degree one such that $u:=ab$ has degree
$\leq N-1$. For this we need $a_{1}b_{N}=a_{-1}b_{-N}=0$.
We take a map $g:M\to \mathfrak{su}(2)$ with $\det g=1$ such
that its $i$ eigenspace is $L$ and its $-i$ eigenspace is $U$.
By Lemmas \ref{lemma:eq} and \ref{lemma:holo}, $-\star d_{A}g=(d_{A}g)\,g$. The construction
of $a$ with $ag=V(a)$ from Lemma \ref{lemma:int} is precisely such that
the kernel of $a_{-1}$ is $L$ and the kernel of $a_{1}$ is $U$, so the
needed relations to lower the degree hold.

Finally by Theorem \ref{cor:back}, $u$ gives rise to a cohomologically trivial connection
$-X(u)u^{-1}$. Combining this with Theorem \ref{theorem:finite} we have 
arrived at the main result of this paper:

\begin{Theorem} Let $M$ be a closed orientable surface of negative curvature.
Then any transparent $SU(2)$-connection can be obtained by successive applications
of B\"acklund transformations as described in Theorem \ref{cor:back}.
\label{theorem:main}
\end{Theorem}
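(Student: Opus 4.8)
The plan is to induct on the degree $N$ of $b$. Since $M$ has negative curvature its geodesic flow is Anosov, hence transitive, so by the Livsic theorem and the regularity results of \cite{NT} the transparent connection $A$ is cohomologically trivial; thus there is a smooth $b:SM\to SU(2)$ with $X(b)+Ab=0$, and by Theorem \ref{theorem:finite} its degree $N$ is finite. This degree is a gauge invariant (a gauge transformation by $v_{0}:M\to SU(2)$ replaces $b$ by $(v_{0}\circ\pi)\,b$, of the same degree), so we may speak of the degree of $A$. If $N=0$ then $b:M\to SU(2)$ and $A=-(db)b^{-1}$ is gauge equivalent to the trivial connection, which is reached by zero B\"acklund transformations; this is the base case. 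Throughout, the statement is read modulo gauge equivalence, i.e. in $\cA_{0}/G_{V}$: this is forced already because the lift $a$ attached to a given $g$ in Lemma \ref{lemma:int} is determined only up to the $G_{V}$-ambiguity of Theorem \ref{theorem:1-1}.

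For the inductive step let $N\ge1$; this is exactly the construction performed earlier in this section together with Section \ref{bt}. By Lemma \ref{lemma:oe} we may assume $b$ is even or odd, so $b=b_{-N}+d+b_{N}$ with $d$ of degree at most $N-2$ (hence $d=0$ if $N=1$) and $b_{\pm N}\neq0$. By Lemma \ref{lemma:holo} the line subbundle $L=\mbox{\rm Image}(b_{-N})\subset M\times\C^{2}$ is $\bar{\partial}_{A}$-holomorphic; let $g:M\to\mathfrak{su}(2)$ with $\det g=1$ have $i$-eigenspace $L$ and $-i$-eigenspace $U=jL$, and let $a:SM\to SU(2)$ of degree one be the map of Lemma \ref{lemma:int}, so that $\ker a_{1}=U$ and $\ker a_{-1}=L$, i.e. $a_{1}b_{N}=a_{-1}b_{-N}=0$. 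Then $u:=ab$ has degree at most $N-1$, it solves $X(u)+A_{F}u=0$ with $A_{F}:=-X(u)u^{-1}=-X(a)a^{-1}+aAa^{-1}$, and $A_{F}$ is precisely the B\"acklund transform of $A$ given by Theorem \ref{cor:back} applied to $(A,L)$. As $u:SM\to SU(2)$ is smooth, $A_{F}$ is cohomologically trivial, hence transparent (the flow being Anosov), and $\deg A_{F}=\deg u\le N-1<N$, so the inductive hypothesis applies to $A_{F}$.

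It remains to show that $A$ is itself a B\"acklund transform of $A_{F}$ in the sense of Theorem \ref{cor:back}; prepending the step $A_{F}\rightsquigarrow A$ to the tower furnished by the inductive hypothesis then completes the proof. From $A_{F}=-X(a)a^{-1}+aAa^{-1}$ one computes $A=-X(a^{-1})(a^{-1})^{-1}+a^{-1}A_{F}(a^{-1})^{-1}$, which is precisely the recipe of Theorem \ref{cor:back} with input the degree-one map $a^{-1}:SM\to SU(2)$ and connection $A_{F}$; here $(a^{-1})^{-1}V(a^{-1})=g'$ with $g':=-aga^{-1}$. The key point is that $g'$ descends to a map $M\to\mathfrak{su}(2)$: from $\ker a_{1}=U$ and $\ker a_{-1}=L$ one gets $a_{1}g=ia_{1}$ and $a_{-1}g=-ia_{-1}$, while $aa^{-1}=\mbox{\rm Id}$ gives $a_{1}(a^{-1})_{1}=a_{-1}(a^{-1})_{-1}=0$; together these annihilate the degree $\pm2$ components of $aga^{-1}$, so $g'\in\Omega_{0}$, and $\det g'=1$, $g'^{2}=-\mbox{\rm Id}$. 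Let $L'\subset M\times\C^{2}$ be its $i$-eigenspace (concretely $L'=a\,U$, which is therefore $v$-independent). Then $L'$ is $\bar{\partial}_{A_{F}}$-holomorphic: by Lemma \ref{lemma:eq} applied to $A_{F}$ and $g'$ this is equivalent to $-\star d_{A_{F}}g'=(d_{A_{F}}g')g'$, and by Lemma \ref{lemma:gmero}, applied with $A$ replaced by $A_{F}$ and $b$ by $u$ (so that $u^{-1}V(u)$ satisfies (\ref{mypde}) since $A_{F}$ is cohomologically trivial), this is in turn equivalent to the statement that $(a^{-1}u)^{-1}V(a^{-1}u)=b^{-1}V(b)=f$ satisfies (\ref{mypde}) — which is the hypothesis on $A$. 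Hence Theorem \ref{cor:back} applied to $(A_{F},L')$ returns $-X(a^{-1})(a^{-1})^{-1}+a^{-1}A_{F}(a^{-1})^{-1}=A$ (modulo the gauge freedom in the lift), as required.

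The main obstacle is this last step: verifying that running the degree-lowering construction backwards is a bona fide instance of Theorem \ref{cor:back}. There are two things to be careful about. First, one must identify the correct $\bar{\partial}_{A_{F}}$-holomorphic line subbundle $L'$ and check its holomorphicity; this is exactly where the defining relations $a_{1}b_{N}=a_{-1}b_{-N}=0$ of the construction enter, ensuring that $g'=-aga^{-1}$ is independent of the fibre direction and so descends to $M$. Second, one must track the $G_{V}$-ambiguity in the lifts $a$ and $a^{-1}$, since Theorem \ref{theorem:1-1} — and hence the whole chain of B\"acklund transformations — is canonical only modulo gauge equivalence. Everything else is the Fourier-degree bookkeeping already set up in Sections \ref{prelim}--\ref{bt} and the present section, together with Theorems \ref{theorem:finite}, \ref{theorem:1-1} and \ref{cor:back} and Lemmas \ref{lemma:int}, \ref{lemma:gmero}, \ref{lemma:eq}, \ref{lemma:holo} and \ref{lemma:oe}.
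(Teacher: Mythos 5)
Your proposal is correct and follows the paper's own route: cohomological triviality via Livsic, finiteness of the degree of $b$, the parity Lemma \ref{lemma:oe}, the holomorphic line bundle $L=\mbox{\rm Image}(b_{-N})$ from Lemma \ref{lemma:holo}, and the degree-lowering application of Theorem \ref{cor:back}, followed by induction on the degree. The only difference is that you explicitly verify that the reverse step $A_{F}\rightsquigarrow A$ is itself an instance of Theorem \ref{cor:back} (via $g'=-aga^{-1}$ and Lemmas \ref{lemma:gmero} and \ref{lemma:eq}), a point the paper leaves implicit; this is a welcome, and correct, addition rather than a departure.
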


\begin{Remark}{\rm It should be possible to do a similar analysis when
 the structure group is $U(n)$ by considering maps $g:M\to G_{k,n}$
where $G_{k,n}$ is the complex Grassmannian of $k$-planes in $\mathbb C^n$.
There is a natural embedding of $G_{k,n}$ into $U(n)$ defined by sending
a $k$-plane $\ell\in G_{k,n}$ into the unitary map $i(\pi_{\ell}-\pi_{\ell^{\perp}})$, where $\pi_{\ell}$ denotes the Hermitian orthogonal projection
onto $\ell$.
It would also be interesting to understand the set of transparent 
connections for the case of non-trivial bundles and for structure groups
other than $U(n)$.

}
\end{Remark}

\end{document}